\numberwithin{equation}{section} 
\numberwithin{figure}{section} 
  \theoremstyle{plain}
  \newtheorem{thm}{Theorem}[section]
    \theoremstyle{plain}
  \newtheorem{my-def}[thm]{Definition}
  \theoremstyle{plain}
  \newtheorem{cor}[thm]{Corollary}
  \theoremstyle{plain}
  \newtheorem{prop}[thm]{Proposition}
  \theoremstyle{remark}
  \newtheorem{rem}[thm]{Remark}
  \theoremstyle{remark}
  \theoremstyle{plain}
\def\bfR#1{{\bf R}^#1}
\def\com#1{ \hbox{#1}}
\def\e{\hbox{\rm e}}
\def\<{{\langle }}
\def\>{{\rangle }}
\def\bfR#1{{\bf R}^#1}
\def\com#1{ \quad\hbox{#1}\quad}
\def\e{\hbox{\rm e}}
\def\<{{\langle }}
\def\>{{\rangle }}
\begin{document}

\title{The TreadmillSled of a curve}

\author{ Oscar M. Perdomo }

\date{\today}

\curraddr{Department of Mathematics\\
Central Connecticut State University\\
New Britain, CT 06050\\
}

\email{ perdomoosm@ccsu.edu}

\begin{abstract}
In \cite{P}, the author introduced the notion of TreadmillSled of a curve, which is an operator that takes regular curves in $\bfR{2}$ to curves in $\bfR{2}$. This operator turned out to be very useful to describe helicoidal surfaces, for example, it provides an interpretation for the profile curve of helicoidal surfaces with constant mean curvature similar to the well known interpretation of the profile curve of Delaunay's surfaces using conics, see \cite{P}. In \cite{KP} the authors used the TreadmillSled to classify all helicoidal surfaces with constant anisotropic mean curvature coming from axially symmetric anisotropic energy density. Also, in \cite{P}, the author proves that an helicoidal surface different from a cylinder has constant Gauss curvature if and only if  the TreadmillSled of its profile curve lies in a vertical semi line contained in the lower or upper half plane and not contained in the $y$-axis... Why not the whole vertical line? and why the semi-line cannot be contained in the $y$-axis?  In this paper we provide several properties of the TreadmillSled operator, in particular we will answer the questions in the previous sentence. Finally, we prove that the TreadmillSled of the profile curve of a minimal helicoidal surface is either a hyperbola or a the $x$-axis. The latter case occurs only when the surface is a helicoid.

\end{abstract}

\subjclass[2000]{53C42, 53A10}

\maketitle
\section{Introduction}

The surface of revolution generated by the regular curve $\alpha=(y(s),z(s))$ with $z(s)\ne0$  is given by

$$\phi(s,t)=(z(s)\sin(t),y(s),z(s)\cos(t))$$

The curve $\alpha$ is called the profile curve. In \cite{De}, Delaunay proved that a surface of revolution has constant mean curvature if and only if it is a sphere, a cylinder or if its profile curve lies in the trace made by the focus of a conic, when this conic rolls along the $y$-axis. When the conic used is a parabola, the surface is minimal and it is called catenoid; if the conic used is a hyperbola, the surface is called a nodoid and if the conic used is an ellipse, the surface is called an unduloid. Since an ellipse has two foci, the trace of each one of them generates an undoloid. It is not difficult to see that these two unduloids are essentially the same, one is a translation of the other. Figure 1.1 
shows how the profile curve of an unduloid is constructed using an ellipe.

\begin{figure}[h]\label{unduloidandprofilecurve}
\centerline{\includegraphics[width=5.59cm,height=6cm]{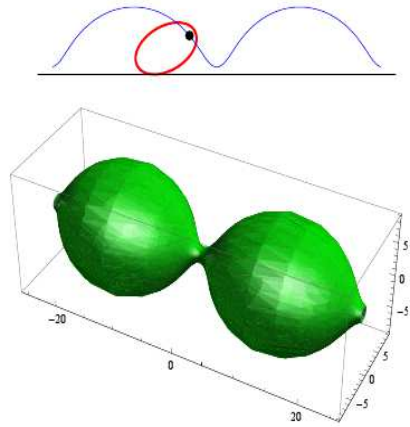}}
\caption{An unduloid and the construction of its profile curve }
\end{figure}

When we roll the parabola its focus traces a curve of infinity length. Figure 1.2 
shows a catenoid and its profile curve.

\begin{figure}[h]\label{catenoid}
\centerline{\includegraphics[width=7.7cm,height=5cm]{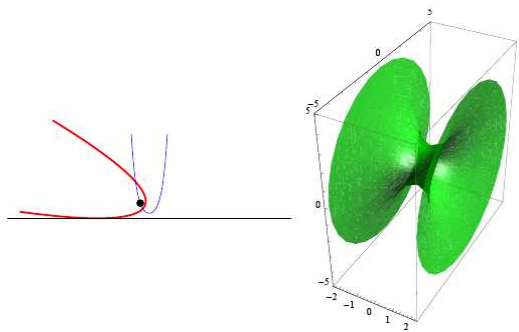}}
\caption{A catenoid and the construction of its profile curve }
\end{figure}

When we roll a set of hyperbolas, their foci trace two curves of finite length, each curve generates a cmc surface. It can be proven that we can translate one of these surfaces to obtain a smooth connected surface with constant mean curvature. If we repeat this connected piece over and over we obtain a  complete cmc surface. In Figure 1.3 
we show the trace of the foci, the two-piece cmc surface and the connected piece made by gluing the translation of one of the connected components of the initial two-piece surface to the other connected component.

\begin{figure}[h]\label{nodoid}
\centerline{\includegraphics[width=12.8cm,height=5cm]{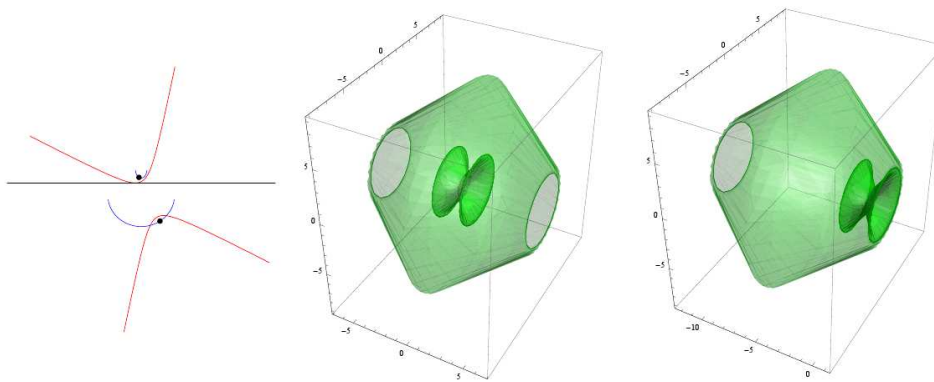}}
\caption{A nodoid and the construction of its profile curve}
\end{figure}

Let us restate Delaunay's theorem. Think of the operator $Roll$ that takes regular curves in $\bfR{2}$ into curves in $\bfR{2}$ given in the following way: For a regular curve $\alpha:[a,b]\to \bfR{2}$, let $s(t)$ denote the length of the curve from $\alpha(a)$ to $\alpha(t)$ and let us define

$$Roll(\alpha)= \{ T_t\begin{pmatrix}0 \\ 0\end{pmatrix}\, :\, \hbox{$T_t$ is an oriented isometry in $\bfR{2}$, $T_t(\alpha(t))=\begin{pmatrix}s(t) \\ 0\end{pmatrix}$ and $\, dT_t(\frac{\alpha^\prime(t)}{|\alpha^\prime(t)| })= \begin{pmatrix} 1 \\ 0 \end{pmatrix}$}\}$$

With this operator, Delaunay's theorem implies that if $\alpha:[0,l]\to \bfR{2}$ is a piece of conic with focus at the origin, then $Roll(\alpha)$ is the profile curve of a surface of revolution with constant mean curvature. Notice how the center of the curve $\alpha$ plays an important role in the definition of $Roll(\alpha)$.

In \cite{P}, the author found a dynamical interpretation for helicoidal surfaces with constant mean curvature. For the sake of comparison, let us rewrite the first part of the introduction, but this time for helicoidal surfaces.
The helicoidal surface generated by the regular curve $\alpha=(x(s),z(s))$  is given by

$$\phi(s,t)=(x(s) \cos(w t)+z(s) \sin(wt),t,-x(s)\sin(w t)+z(s)\cos(w t))\com{with $w>0$ fixed }$$

The curve $\alpha$ is called the profile curve. In \cite{P}, the author proved that a helicoidal surface has constant mean curvature one, if and only if, it is  a cylinder of radius $\frac{1}{2}$ or if the origin of its profile curve, $\alpha$, trace the curve

\begin{eqnarray}\label{heart-shaped curves}
 x^2 + y^2- \frac{y}{\sqrt{1+w^2x^2}}=M\com{for some $M> -\frac{1}{4}$}
\end{eqnarray}

when $\alpha$ moves on a treadmill located at the origin aligned in the direction of the $x$-axis. Figures 1.4 and 1.5 show examples of how the center of the profile curve generates curves of the form (\ref{heart-shaped curves}).

\begin{figure}[h]\label{catenoid}
\centerline{\includegraphics[width=9.41cm,height=5cm]{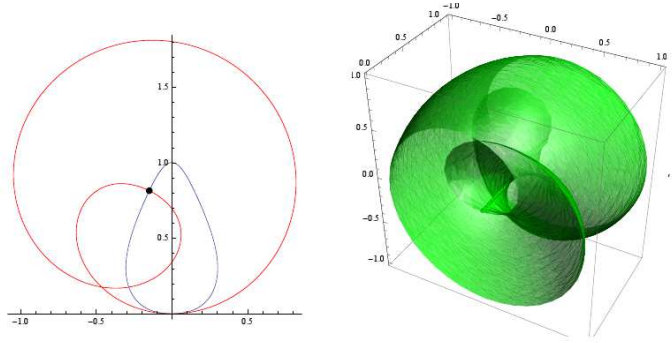}}
\caption{A helicoidal surface with cmc and the construction of its profile curve. The closed embedded curve on the left is given by the equation (\ref{heart-shaped curves})}
\end{figure}

\begin{figure}[h]\label{fundamental piece}
\centerline{\includegraphics[width=11.43cm,height=5cm]{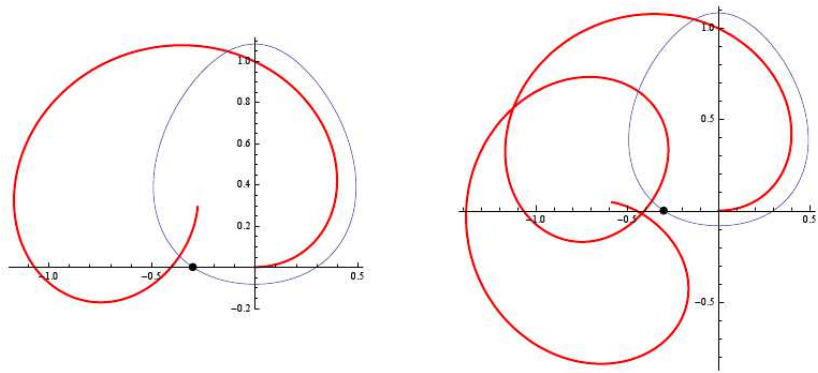}}
\caption{The profile curve of a cmc helicoidal surface is the union of fundamental pieces, here we show a fundamental piece on the left and the union of two fundamental pieces on the right.}
\end{figure}

The fact that the TreadmillSled of the profile curve of a helicoidal surface with cmc is a closed curve allow us to define a fundamental piece of the profile curve, see Figure 1.5, 
which in turn, easily provides a dense versus properly-immersed duality property for all these cmc surfaces. See \cite{P} for details.

Let us restate the previous theorem for helicoidal cmc surfaces. Think of the operator $TSS$ that take regular curves in $\bfR{2}$ into curves in $\bfR{2}$ given in the following way: for a regular curve $\alpha:[a,b]\to \bfR{2}$, let us define

$$TSS(\alpha)= \{ T_t\begin{pmatrix}0 \\ 0\end{pmatrix}\, :\, \hbox{$T_t$ is an oriented isometry in $\bfR{2}$, $T_t(\alpha(t))=\begin{pmatrix} 0 \\ 0\end{pmatrix}$ and $\, dT_t(\frac{\alpha^\prime(t)}{|\alpha^\prime(t)| })= \begin{pmatrix} 1 \\ 0 \end{pmatrix}$}\}$$

The letters $TSS$ stand for TreadmillSled Set. With this operator, we can say that if $\alpha:[a,b]\to \bfR{2}$ is the profile curve of a helicoidal surface with cmc one, then $TSS(\alpha)$ lies in a curve of the form (\ref{heart-shaped curves}). Notice how the center of curve $\alpha$ plays an important role in the definition of $TSS(\alpha)$. Figure 1.6 shows the TreadmillSled of the graph of a polynomial of degree 3.

\begin{figure}[h]\label{catenoid}
\centerline{\includegraphics[width=11.25cm,height=6cm]{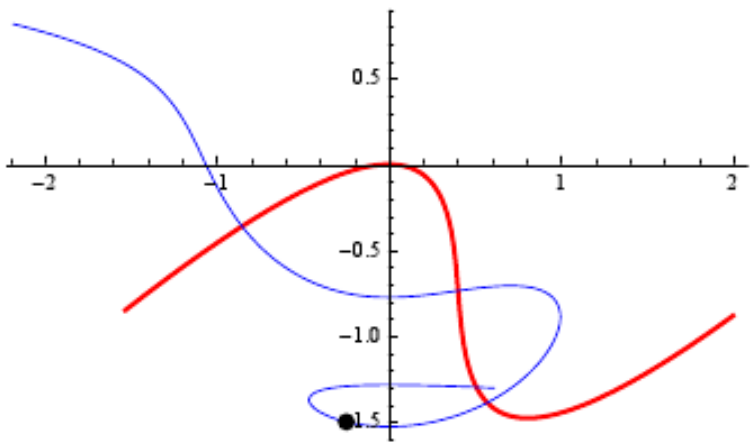}}
\caption{The TreadmillSled of the graph of a cubic polynomial }
\end{figure}

As more applications of the TreadmillSled, in  \cite{P}, the author showed that a helicoidal surface has zero Gauss curvature if and only if the TreadmillSled of its profile curve lies in a vertical semi line contained in the upper or lower plane (see Figure 1.7).

\begin{figure}[h]\label{flathelicoidal}
\centerline{\includegraphics[width=7.27cm,height=5cm]{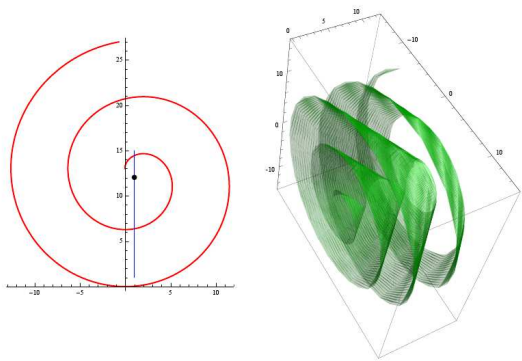}}
\caption{The TreadmillSled of the profile curve of a flat helicoidal surface lies in a vertical semiline. }
\end{figure}

Anytime we fixed a positive function $\upsilon:S^2\to {\bf R}$ on the Euclidean unit sphere we can define an {\it anisotropic} mean curvature for surfaces in $\bfR{3}$. When $\upsilon$ is the constant function $1$, the anisotropic mean curvature agrees with the mean curvature. Delaunay's result for constant mean curvature of revolution was extended to anisotropic constant mean curvature surfaces of revolution by Miyuki and Palmer in 2008, \cite{MP}. Likewise, the dynamical interpretation  for helicoidal constant mean curvature, \cite{P}, was extended to anisotropic constant mean curvature surfaces by  Khuns and Palmer \cite{KP}. In the latter paper, Khuns and Palmer found a formula for the inverse of the operator TreadmillSled. Here in this paper we will elaborate more on this formula and prove some properties for the TreadmillSled. For purposes of a better understanding we will find an explicit parametrization for the TreadmillSled of a curve and we will be refereing to this parametrization of the TreadmillSled as just $TS$; in this way, $TS$ becomes an operator that takes a parametrized regular curve into a parametrized curve. We will show that this operator acts like the derivative operator for functions. For example,

\begin{itemize}
\item

Given $\alpha:[a,b]\to \bfR{2}$, $TS(\alpha)$ is an expression of $\alpha$ and $\alpha^\prime$

\item

If $TS(\alpha)=TS(\beta)$ then $\alpha$ and $\beta$ differ by a constant. This time the constant does not represent a translation on the graph like in the case of the derivative operator but it represents an oriented rotation that fixes the origin. More precisely, if we identify $\bfR{2}$ with the complex numbers, then $\beta=\e^{ic}\alpha$ for some constant $c$.
\item

When $\gamma$ is in the image of the operator $TS$, there is a formula for $TS^{-1}(\gamma)$ that depends on $\gamma$, $\gamma^\prime$ and only one antiderivative. The ambiguity of this antiderivative is responsible for the existence of the whole 1 parametric family of curves with the same TreadmillSled.

\item
If we change the orientation of $\alpha$, that is, if we consider the curve $\beta(t)=\alpha(-t)$, then $TS(\beta)(t)=-TS(\alpha)(-t)$.

\end{itemize}

If we look at figures 1.4, 1.5 and 1.6, we notice that the curves that are TreadmillSled have the property that their velocity vector is horizontal where the curve intercepts the  $y$ axis. This is not a coincidence, actually we will show that a curve $\gamma(t)=(x(t),y(t))$ is the TreadmillSled of a regular curve $\alpha$ if and only if
\begin{itemize}
\item
$y^\prime(t)=-f(t) x(t)$ for some continuous function $f$ and
\item
$y(t) f(t)-x^\prime(t)$ is a positive function.
\end{itemize}

From the first property we see that if $x(t_0)=0$, then $y^\prime(t_0)=0$ and therefore the velocity vector $\gamma^\prime(t_0)$ is horizontal on points along the $y$-axis.  The second property is the reason why a whole vertical line cannot be the TreadmillSled of a regular curve. For a vertical line, $x^\prime(t)$ always vanishes and therefore when the line touches the $x$-axis, the function $y(t) f(t)-x^\prime(t)$ vanishes making the second property fail at this point. Also notice that if the vertical semi-line is contained in the $y$-axis, then by the relation  $y^\prime(t)=-f(t) x(t)$, we must have that $y^\prime(t)$ vanishes and therefore $\gamma$ reduces to just a point. It is easy to see that the TreadmillSled of a circle centered at the origin is just a point in the $y$-axis. Notice that if the profile curve of a helicoidal surface is a circle centered at the origin then the surface is a cylinder.

At the end of this paper we prove that the TreadmillSled of the profile curve of a minimal helicoidal surface is either the $x$-axis (when the surface is a helicoid), or it is a hyperbola centered at the origin.

\section{The $\phi$-TreadmillSled of a curve}

In this paper we will assume that all functions have as many derivatives as needed. Let us start this section with a definition that extends the notion of TreadmillSled. One of the reasons we introduce this notion is because it provides an interpretation for the curve $h(t)\alpha(t)$ when $h:[a,b]\to \mathbb{C}$, $\alpha:[a,b]\to \mathbb{C}$ are curve in the complex plane with $|h(t)|=1$, see Corollary \ref{complexmultbycurvesinS1}.

\begin{my-def}\label{phi Treadmill} Given a regular curve $\alpha:[a,b]\to \bfR{2}$ and a function $\phi:[a,b]\to {\bf R}$, we define the $\phi$ -TreadmillSled of $\alpha$ as the set of points

$$ \{ T_s\begin{pmatrix}0 \\ 0\end{pmatrix}\, :\, \hbox{$T_s$ is an oriented isometry in $\bfR{2}$, $T_s(\alpha(s))=\begin{pmatrix}0 \\ 0\end{pmatrix}$ and $\, dT_s(\frac{\alpha^\prime(s)}{|\alpha^\prime(s)| })= \begin{pmatrix}\cos(\phi(s)) \\ \sin(\phi(s))\end{pmatrix}$}\}$$

This set of points will be denoted by $ \phi\hbox{-}TS(\alpha)$.
\end{my-def}

\begin{rem} \label{independece of parameter} Notice that the definition of the $\phi\hbox{-}TS(\alpha)$ is independent of the parametrization, it only depends on the orientation of the curve. That is, if $h:[c,d]\to [a,b]$ is a function with positive derivative and $\tilde{\alpha}(t)=\alpha(h(t))$ and $\tilde{\phi}(t)=\phi(h(t))$, then $\tilde{\phi}\hbox{-}TS(\tilde{\alpha})=\phi\hbox{-}TS(\alpha)$
\end{rem}

It is not difficult to see that the $\phi$-TreadmillSled of $\alpha$ can be viewed as the curve generated by doing the following steps:

\begin{itemize}
\item
Imagine that the curve $\alpha$ is in a plane which can freely move. Moreover, let us assume that there is a hole in the origin of this plane and also let us assume that we have placed a pencil in this hole.

\item

Imagine that another plane, this one fixed, contains a treadmill based at the origin with a device that allows the treadmill to incline at any angle.

\item

The curve $\alpha$ in the moving plane will generate another curve in the fixed plane, the $\phi$-TreadmillSled of $\alpha$.

\item

The $\phi$-TreadmillSled of $\alpha$ is the curve drawn on the fixed plane by the pencil located at the origin of the moving plane, when the curve $\alpha$ passes on the treadmill with the property that, anytime the point $\alpha(s)$ is on the treadmill, the treadmill is aligned in the direction $(\cos(\phi(s)),\sin(\phi(s)))$.

\end{itemize}

The following proposition will provide a formula to find the $\phi$-TreadmillSled of a curve $\alpha$

\begin{prop}\label{formula for TS}
Let $\alpha:[a,b]\to \bfR{2}$ be a regular curve in $\bfR{2}$, if $\alpha(s)=(x(s),y(s))^T=\begin{pmatrix}x(s)\\y(s)\end{pmatrix}$, then,

\begin{eqnarray}\label{parametrization of TS}
\beta(s)=A(\theta(s))\,  \alpha(s)=-A(-\phi(s))A(\rho(s))\alpha(s)
\end{eqnarray}

is a parametrization of the $\phi$-TreadmillSled of $\alpha$. Here

$$ A(\tau)=\begin{pmatrix} \cos(\tau)& \sin(\tau) \\ -\sin(\tau)& \cos(\tau) \end{pmatrix}$$

and

$$\theta(s)=\rho(s)-\phi(s)+\pi\com{and} \begin{pmatrix}\cos(\rho(s))\\\sin(\rho(s)) \end{pmatrix}=\frac{1}{|\alpha^\prime(s)|}\, \alpha^\prime(s)$$

\end{prop}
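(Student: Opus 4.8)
The plan is to show that for each fixed $s$ the two conditions in Definition \ref{phi Treadmill} pin down the oriented isometry $T_s$ uniquely, and then simply to read off the point $T_s\begin{pmatrix}0\\0\end{pmatrix}$. First I would write a general orientation-preserving isometry of $\bfR{2}$ as $T_s(v)=R_s\,v+c_s$ with $R_s\in SO(2)$ and $c_s\in\bfR{2}$; its differential is the linear part, $dT_s=R_s$. The tangent condition then reads
$$R_s\begin{pmatrix}\cos(\rho(s))\\ \sin(\rho(s))\end{pmatrix}=\begin{pmatrix}\cos(\phi(s))\\ \sin(\phi(s))\end{pmatrix},$$
which says that $R_s$ carries the unit vector at angle $\rho(s)$ to the unit vector at angle $\phi(s)$, so $R_s$ is forced to be the counterclockwise rotation by the angle $\phi(s)-\rho(s)$, which I will denote $R(\phi(s)-\rho(s))$. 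The normalization $T_s(\alpha(s))=0$ then forces $c_s=-R_s\,\alpha(s)$, so $T_s$ is completely determined and the set in the definition is a single point. Consequently the $\phi$-TreadmillSled is parametrized by
$$\beta(s)=T_s\begin{pmatrix}0\\0\end{pmatrix}=c_s=-R(\phi(s)-\rho(s))\,\alpha(s).$$

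It then remains to check that this point coincides with both formulas in (\ref{parametrization of TS}). The key is to relate the matrix $A(\tau)$ to the standard rotation: evaluating $A(\tau)$ on the basis vectors shows that $A(\tau)$ is the \emph{clockwise} rotation by $\tau$, i.e. $A(\tau)=R(-\tau)$. From this one reads off the two structural facts $A(\tau_1)A(\tau_2)=A(\tau_1+\tau_2)$ and $A(\pi)=-I$. Using additivity together with $\theta(s)=\rho(s)-\phi(s)+\pi$,
$$A(\theta(s))=A(\pi)\,A(-\phi(s))\,A(\rho(s))=-A(-\phi(s))A(\rho(s)),$$
which is exactly the second expression in (\ref{parametrization of TS}). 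Moreover $A(-\phi(s))A(\rho(s))=A(\rho(s)-\phi(s))=R(\phi(s)-\rho(s))=R_s$, so $A(\theta(s))=-R_s$ and hence $\beta(s)=A(\theta(s))\alpha(s)=-R_s\alpha(s)$, matching the point computed above.

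There is no genuine analytic obstacle here; the whole content is the uniqueness of $T_s$ together with careful bookkeeping of the orientation conventions. The one place that needs attention is precisely the sign convention built into $A$: because $A$ is a clockwise rather than the usual counterclockwise rotation, the angle $\phi(s)-\rho(s)$ dictated by the tangent condition enters with a sign flip, and the extra $+\pi$ in $\theta(s)$ is exactly what supplies the overall minus sign $A(\pi)=-I$ that turns $T_s(0)=-R_s\alpha(s)$ into the stated form. I would therefore establish the convention $A(\tau)=R(-\tau)$ explicitly at the outset so that the remaining matching is automatic. Finally, by Remark \ref{independece of parameter} the result is independent of the chosen parametrization and depends only on the orientation of $\alpha$, so it suffices to verify it for the given parametrization.
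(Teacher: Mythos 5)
Your proposal is correct and follows essentially the same route as the paper's proof: decompose the oriented isometry $T_s$ into a rotation plus a translation, solve the tangent condition $dT_s(\alpha'/|\alpha'|)=(\cos\phi,\sin\phi)^T$ for the rotation angle via the additivity $A(\tau_1)A(\tau_2)=A(\tau_1+\tau_2)$, and then read off the translation from $T_s(\alpha(s))=0$, which yields $\beta(s)=-A(\rho(s)-\phi(s))\alpha(s)=A(\theta(s))\alpha(s)$. Your explicit remark that $A(\tau)=R(-\tau)$ is a clockwise rotation, and that the $+\pi$ in $\theta$ supplies $A(\pi)=-I$, is just a cleaner bookkeeping of the same sign conventions the paper handles implicitly.
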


\begin{proof}
We will use the parameter $s$ to describe points in the set $\phi\hbox{-}TS(\alpha)$. For a fixed $s\in [a,b]$, let us find an oriented isometry of $\bfR{2}$ such that $T_s(\alpha(s))=\begin{pmatrix}0 \\ 0\end{pmatrix}$ and $\, dT_s(\frac{\alpha^\prime(s)}{|\alpha^\prime(s)| })= \begin{pmatrix}\cos(\phi(s)) \\ \sin(\phi(s))\end{pmatrix}$. We know that

$$T_s\begin{pmatrix}u \\ v\end{pmatrix} = A(\tilde{\theta}(s))\begin{pmatrix}u \\ v\end{pmatrix} + \begin{pmatrix}c_1(s) \\ c_2(s)\end{pmatrix}$$

Notice that once we find $\tilde{\theta}(s)$, $c_1(s)$ and $c_2(s)$, using the definition \ref{phi Treadmill}, we get that $\beta(s)=(c_1(s),c_2(s))^T$ is a point in $\phi\hbox{-}TS(\alpha)$; and therefore, when we vary $s$ in the interval $[a,b]$, we obtain that  $\beta(s)=(c_1(s),c_2(s))$ is a parametrization of $\phi\hbox{-}TS(\alpha)$.

Since

$$dT_s \begin{pmatrix}v_1 \\ v_2\end{pmatrix}=A(\tilde{\theta}(s))\begin{pmatrix}v_1 \\ v_2\end{pmatrix}\com{ and}   dT_s(\frac{\alpha^\prime(s)}{|\alpha^\prime(s)| })= \begin{pmatrix}\cos(\phi(s)) \\ \sin(\phi(s))\end{pmatrix}$$

We have that

$$A(\tilde{\theta}(s))\begin{pmatrix}\cos(\rho(s))\\\sin(\rho(s)) \end{pmatrix}=\begin{pmatrix}\cos(\phi(s))\\\sin(\phi(s)) \end{pmatrix}$$

and therefore,

$$A(\tilde{\theta}(s))A(-\rho(s))\begin{pmatrix}1\\0 \end{pmatrix}=A(-\phi(s)) \begin{pmatrix}1\\ 0 \end{pmatrix}$$

Since $A(\tau_1+\tau_2)=A(\tau_1)A(\tau_2)$, the last equation implies that $A(\tilde{\theta}(s)-\rho(s)+\phi(s))\, \begin{pmatrix}1\\0 \end{pmatrix}=\begin{pmatrix}1\\0 \end{pmatrix}$, which implies that $\tilde{\theta}(s)=\rho(s)-\phi(s)$.

Now, using the equation $T_s(\alpha(s))=\begin{pmatrix}0\\0 \end{pmatrix}$ we get that

$$\begin{pmatrix}c_1(s)\\c_2(s) \end{pmatrix} = -A(\tilde{\theta}(s))\alpha(s)=A(\theta(s)) \alpha(s)$$

Since $\beta(s)=(c_1(s),c_2(s))^T$, then the proposition follows.

\end{proof}

\begin{rem}
The definition of TreadmillSled of a curve given in the introduction corresponds with the $\phi$-TreadmillSled when $\phi$ is the zero function. Sometimes we will view $\phi\hbox{-}TS(\alpha)$ not as a set but as the parametrized curve described in (\ref{parametrization of TS}).
\end{rem}

To be more precise, we will use Proposition \ref{formula for TS} to define the TreamillSled as an operator that takes a regular parametric curve into a parametric curve.

\begin{my-def} \label{definition of TS}
Let $\alpha:[a,b]\to \bfR{2}=\begin{pmatrix} x(s)\\ y(s)\end{pmatrix}$ be a regular curve. We define the TreadmillSled of $\alpha$ as the parametric curve $TS(\alpha):[a,b]\to \bfR{2}$ given by

$$TS(\alpha)(s)=\frac{1}{\sqrt{x^\prime(s)^2+y^\prime(s)^2}} \, \begin{pmatrix} -x^\prime(s) x(s)-y^\prime(s) y(s) \\ x(s)y^\prime(s)-y(s)x^\prime(s)\end{pmatrix} $$
\end{my-def}

\begin{cor}
If  we identify each point $\begin{pmatrix} x_1\\ x_2\end{pmatrix}\in \bfR{2}$ with the complex number $x_1+i\, x_2$, then

$$\phi\hbox{-}TS(\alpha)=\e^{\phi}\, TS(\alpha) $$

For any curve $\alpha(s)=x_1(s)+i\, x_2(s)$. Moreover, if the function $\phi$ is fixed, then, the $\phi$-TreadmillSled of two curves is the same, if and only if the TreadmillSled of the curves is the same.
\end{cor}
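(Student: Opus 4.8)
The plan is to read the identity straight off Proposition \ref{formula for TS} and then translate the rotation matrices into complex multiplication. First I would recall the factored form of (\ref{parametrization of TS}),
$$\phi\hbox{-}TS(\alpha)=-A(-\phi)\,A(\rho)\,\alpha=A(-\phi)\bigl(-A(\rho)\,\alpha\bigr),$$
so that the problem splits into identifying the inner factor $-A(\rho)\alpha$ and the outer factor $A(-\phi)$ separately.

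For the inner factor I would observe that $-A(\rho)\alpha$ is exactly the $\phi\equiv 0$ instance of the construction. A one-line matrix computation using $\begin{pmatrix}\cos\rho\\\sin\rho\end{pmatrix}=\alpha^\prime/|\alpha^\prime|$, i.e. $\cos\rho=x^\prime/|\alpha^\prime|$ and $\sin\rho=y^\prime/|\alpha^\prime|$, gives
$$-A(\rho)\,\alpha=\frac{1}{|\alpha^\prime|}\begin{pmatrix}-x^\prime x-y^\prime y\\ x y^\prime-y x^\prime\end{pmatrix},$$
which is precisely the column vector of Definition \ref{definition of TS}. Hence $-A(\rho)\alpha=TS(\alpha)$ and therefore $\phi\hbox{-}TS(\alpha)=A(-\phi)\,TS(\alpha)$ as vectors in $\bfR{2}$.

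The key remaining step is the complex dictionary. I would verify by direct multiplication that, under $\begin{pmatrix}x_1\\x_2\end{pmatrix}\leftrightarrow x_1+i\,x_2$, the matrix $A(\tau)$ acts as multiplication by $\e^{-i\tau}$: indeed $\e^{-i\tau}(x_1+i x_2)=(x_1\cos\tau+x_2\sin\tau)+i(x_2\cos\tau-x_1\sin\tau)$, which is exactly the complex number represented by $A(\tau)\begin{pmatrix}x_1\\x_2\end{pmatrix}$. Applying this with $\tau=-\phi$ converts $A(-\phi)$ into multiplication by the unit complex number $\e^{i\phi}$, so the previous paragraph yields $\phi\hbox{-}TS(\alpha)=\e^{i\phi}\,TS(\alpha)$, which is the asserted formula.

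Finally, for the ``moreover'' part, the factor $\e^{i\phi(s)}$ has modulus $1$, hence is nonzero and pointwise invertible for every $s$. Thus $\phi\hbox{-}TS(\alpha)=\phi\hbox{-}TS(\beta)$ holds if and only if $\e^{i\phi}TS(\alpha)=\e^{i\phi}TS(\beta)$, and multiplying through by $\e^{-i\phi}$ shows this is equivalent to $TS(\alpha)=TS(\beta)$. I do not expect a genuine obstacle here: the whole argument is bookkeeping. The only point requiring care is keeping the sign convention for $A(\tau)$ consistent, so that $A(\tau)$ becomes multiplication by $\e^{-i\tau}$ rather than $\e^{+i\tau}$, and remembering that the identities are equalities of \emph{parametrized} curves, so the invertibility of the factor must be read pointwise in $s$.
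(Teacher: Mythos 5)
Your proposal is correct and follows exactly the route the paper intends: the corollary is read off the factored parametrization $\phi\hbox{-}TS(\alpha)=-A(-\phi)A(\rho)\alpha$ of Proposition \ref{formula for TS}, the inner factor $-A(\rho)\alpha$ is matched with Definition \ref{definition of TS}, and $A(-\phi)$ is identified with multiplication by the unit complex number $\e^{i\phi}$, after which the ``moreover'' part is immediate by pointwise invertibility. You even correctly supply the factor $\e^{i\phi}$ where the paper's statement writes $\e^{\phi}$ (an evident typo), and your care with the sign convention $A(\tau)\leftrightarrow \e^{-i\tau}$ is exactly the one delicate point.
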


The following proposition gives us some insight about the nature of the operator $\phi$-TreadmillSled defined in the set of regular curves. As we already notice, the $\phi$-TreadmillSled of a curve is independent of the parametrization as long as the orientation is preserved. Therefore, there is not loss of generality if we assume that the curves in the domain of the operator $\phi$-TreadmillSled are parametrized by arc-length.

\begin{prop}
Let $\alpha_1:[a,b]\to \bfR{2}$ and $\alpha_2:[a,b]\to \bfR{2}$  be two curves parametrized by arc-length. $TS(\alpha_1)=TS(\alpha_2)$ if and only if $\alpha_2(s)=A(\tau)\alpha_1(s)$ for some constant $\tau$.
\end{prop}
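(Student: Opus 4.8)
For two arc-length curves $\alpha_1,\alpha_2:[a,b]\to\bfR{2}$, we have $TS(\alpha_1)=TS(\alpha_2)$ iff $\alpha_2(s)=A(\tau)\alpha_1(s)$ for a constant $\tau$.

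Let me understand the TreadmillSled formula. For arc-length parametrization, $x'^2+y'^2=1$, so

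$$TS(\alpha)(s)=\begin{pmatrix} -x'x-y'y \\ xy'-yx'\end{pmatrix}.$$

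**The easy direction.** If $\alpha_2=A(\tau)\alpha_1$ for constant $\tau$, I need $TS(\alpha_2)=TS(\alpha_1)$. The complex-number viewpoint makes this clean. Identifying $\bfR{2}$ with $\C$, note $A(\tau)$ acts as multiplication by $e^{-i\tau}$ (check the matrix: $A(\tau)$ rotates by $-\tau$). The formula for $TS$ in complex form: with $\alpha=x+iy$, the two components $-(x'x+y'y)$ and $xy'-yx'$ are $-\mathrm{Re}(\bar\alpha'\alpha)$ and $\mathrm{Im}(\bar\alpha'\alpha)$... more precisely $TS(\alpha)=-\overline{\alpha'}\,\alpha$ (I'd verify the exact form, possibly $-\bar{\alpha'}\alpha$ or its conjugate). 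Whatever the precise expression, it is bilinear of the form $\bar{\alpha'}\alpha$ up to conjugation, and since $\alpha_2=e^{-i\tau}\alpha_1$ gives $\alpha_2'=e^{-i\tau}\alpha_1'$, the factor $\overline{e^{-i\tau}}\,e^{-i\tau}=1$ cancels. So $TS(\alpha_2)=TS(\alpha_1)$.

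**The hard direction.** Assume $TS(\alpha_1)=TS(\alpha_2)$; I want to produce the constant rotation. Working in $\C$, suppose $TS(\alpha)=-\overline{\alpha'}\,\alpha$ (to be confirmed). For arc-length curves $|\alpha_j'|=1$, write $\alpha_j'=e^{i\rho_j}$. The key observation is that $|TS(\alpha)|=|\alpha|$, so the hypothesis forces $|\alpha_1(s)|=|\alpha_2(s)|$ for all $s$; hence I may write $\alpha_2(s)=e^{i\psi(s)}\alpha_1(s)$ pointwise (away from zeros of $\alpha_1$, with care at zeros). The goal is to show $\psi$ is constant. Substituting into $TS(\alpha_2)=TS(\alpha_1)$ and using $\alpha_2'=(i\psi'\,\alpha_1+\alpha_1')e^{i\psi}$ should yield, after cancellation, an equation forcing $\psi'=0$ wherever $\alpha_1\neq0$. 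I expect the cleanest route is instead to define $g(s)=\alpha_2(s)/\alpha_1(s)$ (on the open set where $\alpha_1\neq0$) and show directly $g'=0$: the relation $\overline{\alpha_2'}\alpha_2=\overline{\alpha_1'}\alpha_1$ combined with $|\alpha_2|=|\alpha_1|$ and $|\alpha_2'|=|\alpha_1'|=1$ should pin down both modulus and argument of $g$. The main obstacle is handling points where $\alpha_1(s)=0$: there $g$ is undefined and the pointwise rotation angle could jump. I would resolve this by noting that since both curves are regular (so $\alpha_j'\neq0$) and $|\alpha_1|=|\alpha_2|$, the zero set of $\alpha_1$ equals that of $\alpha_2$ and is where both curves cross the origin transversally; continuity of the rotation angle $\psi$ (established on each component of the complement) together with the shared tangent condition $e^{i\rho_2}=e^{i(\psi+\rho_1)}$ extends $\psi$ continuously across these isolated points, and constancy of $\psi'$ on the dense open set forces $\psi\equiv\tau$ globally. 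Setting $\tau$ appropriately (accounting for whether $A(\tau)$ corresponds to $e^{i\tau}$ or $e^{-i\tau}$) gives $\alpha_2=A(\tau)\alpha_1$.

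**Strategy summary.** First I would rewrite $TS$ in complex form and verify the modulus identity $|TS(\alpha)|=|\alpha|$; this is the crucial structural fact. The forward implication is then immediate from the rotation-invariance of $\overline{\alpha'}\alpha$. For the converse I would (i) deduce $|\alpha_1|=|\alpha_2|$, (ii) on the complement of the zero set write $\alpha_2=e^{i\psi}\alpha_1$ and differentiate the relation $TS(\alpha_1)=TS(\alpha_2)$ to obtain $\psi'=0$, and (iii) handle the isolated zeros of $\alpha_1$ by continuity. The differentiation step in (ii) is where the real computation lives, but it is routine; the genuine subtlety is step (iii), ensuring the rotation angle does not jump as the curve passes through the origin.
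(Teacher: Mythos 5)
Your proposal is correct, and it takes a genuinely different route from the paper's. Your hedged identifications do check out: for an arc-length curve $\alpha=x+iy$ one has $-\overline{\alpha'}\,\alpha=-(x'x+y'y)+i(xy'-yx')$, which is exactly $TS(\alpha)$ (and $A(\tau)$ is multiplication by $e^{-i\tau}$, as you suspected), so the forward direction is the one-line cancellation $\overline{e^{-i\tau}}\,e^{-i\tau}=1$; the computation you deferred in step (ii) also closes, since $\alpha_2=e^{i\psi}\alpha_1$ gives $\overline{\alpha_2'}\,\alpha_2=\overline{\alpha_1'}\,\alpha_1-i\psi'|\alpha_1|^2$, forcing $\psi'=0$ off the zero set of $\alpha_1$, and at an isolated zero $s_0$ (zeros are isolated by regularity) the ratio $g=\alpha_2/\alpha_1$ tends to $\alpha_2'(s_0)/\alpha_1'(s_0)$ from both sides, so the constant matches across adjacent intervals. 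The paper argues the converse differently: it uses the same modulus identity $|TS(\alpha)|=|\alpha|$ that you isolate, but only at a single point $s_0$ with $\alpha_1(s_0)\ne 0$, rotates $\alpha_1$ by a fixed $A(\tau)$ so the curves agree at $s_0$, and then observes that the $TS$ relation can be inverted into the first-order system $x'=-(xz+yw)/(x^2+y^2)$, $y'=(xw-yz)/(x^2+y^2)$ wherever $x^2+y^2>0$, so the existence-and-uniqueness theorem for ODEs yields $\alpha_2=A(\tau)\alpha_1$ near $s_0$, with a brief continuity argument to propagate globally. What each buys: the paper's ODE viewpoint is the same mechanism that powers its inverse formula for $TS$ (the reconstruction $\alpha=-A(-F)\gamma$), so the proof fits its larger machinery; your argument is more elementary and self-contained (no appeal to ODE uniqueness), and it is actually more explicit than the paper precisely at the delicate point both proofs share --- passage of the curve through the origin, which the paper dismisses with an unelaborated ``continuity argument'' while you spell out the transversal crossing and the matching of the rotation angle via the tangent directions.
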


\begin{proof}
Let $\rho_1(s)$ and $\rho_2(s)$ be functions such that $\begin{pmatrix}\cos(\rho_i(s))\\ \sin(\rho_i(s))\end{pmatrix}=\alpha_i^\prime(s)$. If $\alpha_2(s)=A(\tau)\alpha_1(s)$, then

$$\alpha_2^\prime(s)=A(\tau)\begin{pmatrix} \cos(\rho_1(s))\\ \sin(\rho_1(s)) \end{pmatrix}=\begin{pmatrix}\cos(\rho_1(s)-\tau)\\ \sin(\rho_1(s)-\tau)\end{pmatrix}  $$
therefore, we may assume that $\rho_2(s)=\rho_1(s)-\tau$. Using Proposition \ref{formula for TS}, we obtain that

$$TS(\alpha_2)=A(\rho_2+\pi)\alpha_2=A(\rho_1-\tau+\pi)A(\tau)\alpha_1=A(\rho_1+\pi)\alpha_1=TS(\alpha_1)$$

Therefore, we have proven that if $\alpha_2=A(\tau)\alpha_1$, then $TS(\alpha_1)=TS(\alpha_2)$. Now let us assume that
$TS(\alpha_1)=TS(\alpha_2)$. Let us fix an $s_0\in [a,b]$ such that $|\alpha_1(s_0)|\ne 0$.  Since $TS(\alpha_1)(s_0)=TS(\alpha_2)(s_0)$ then $|\alpha_1(s_0)|=|\alpha_2(s_0)|$. Let $\tau$ be a real number such that $A(\tau)\alpha_1(a)=\alpha_2(a)$ and let us consider $\alpha_3(s)=A(\tau)\alpha_1(s)$. That is, $TS(\alpha_3)=TS(\alpha_2)$, and moreover, we have that $\alpha_3(s_0)=\alpha_2(s_0)$. Using Definition \ref{definition of TS} we get that if $\alpha(s)=\begin{pmatrix} x(s)\\ y(s) \end{pmatrix} $ is a curve parametrized by arc-length and
$TS(\alpha)(s)=\begin{pmatrix} z(s)\\ w(s) \end{pmatrix} $, then,

\begin{eqnarray*}
z(s)&=& -x^\prime(s) x(s)-y^\prime(s) y(s)\cr
w(s)&=& x(s)y^\prime(s)-y(s)x^\prime(s)
\end{eqnarray*}

For values of $s$ such that $x(s)^2+y(s)^2>0$ we get that

\begin{eqnarray*}
x^\prime(s)&=& - \frac{1}{x(s)^2+y(s)^2} \, (  x(s) z(s)+y(s) w(s))\cr
y^\prime(s)&=& \frac{1}{x(s)^2+y(s)^2} \, (  x(s) w(s)-y(s) z(s))
\end{eqnarray*}

By the existence and uniqueness theorem of ordinary differential equations we get that the conditions $\alpha_3(s_0)=\alpha_2(s_0)$ and $TS(\alpha_3)=TS(\alpha_2)$ imply that $\alpha_2(s)=\alpha_3(s)$ for all $s$ near $s_0$. Since both curves are regular, by a continuity argument we conclude that the real number $\tau$ is independent of $s_0$ and therefore $\alpha_2(s)=\alpha_3(s)$ for all $s$. We then get $\alpha_2=A(\tau)\alpha_1$ for some $\tau$. This finishes the proof of the proposition.

\end{proof}

\begin{rem}\label{remark TS using J}
Let us define $J=\begin{pmatrix} 0& -1\\1&0\end{pmatrix}$. If $\alpha$ is an arc-length parametrized curve and $\begin{pmatrix} z(s) \\ w(s) \end{pmatrix}=TS(\alpha)$, then,

$$z=-\<\alpha,\alpha^\prime\>\com{and} w=\<\alpha^\prime,J\alpha\> \com{where $\<\, ,\,\>$ is the Euclidean inner product.}  $$

With this definition of $J$ we have that the curvature of $\alpha$ is $k(s)=\<\alpha^{\prime\prime}(s),J(\alpha^\prime(s))\>$. Notice that if $\alpha^\prime(s)=\begin{pmatrix}\cos(\rho(s)) \\ \sin(\rho(s))\end{pmatrix}$, then $k(s)=\rho^\prime(s)$. Therefore, if we know the curvature $k(s)$  of a curve parametrized by arc-Length and a given angle for the velocity vector, let's say $\alpha^\prime(a)=\begin{pmatrix}\cos(\rho_0) \\ \sin(\rho_0)\end{pmatrix}$, then

$$TS(\alpha)=-A(\rho(s))\alpha(s)\com{where} \rho(s)=\int_a^sk(u)du+\rho_0$$

\end{rem}

The following corollary provides a way to program the inclination on a treadmill (find the function $\phi$) if we want to get the curve $\e^{ig(t)}$ as the $\phi\,$-TreadmillSled of $\alpha$.

\begin{cor} \label{complexmultbycurvesinS1}If $\alpha:[a,b]\longrightarrow \mathbb{C}\cong\bfR{2}$ is a regular curve with curvature function $\kappa$ and $g:[a,b]\to {\bf R}$ is a function, then

$$\e^{i g(t)}\alpha(t)=\phi\hbox{-}TS(\alpha)\, $$

where $\phi(t)=\int_a^t\kappa(\tau)\, |\alpha^\prime(\tau)|\, d\tau+\rho_0 +g(t)+\pi$ and   $\alpha^\prime(a)=\begin{pmatrix}\cos(\rho_0) \\ \sin(\rho_0)\end{pmatrix}$

\end{cor}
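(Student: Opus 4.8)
The plan is to translate the whole statement into complex notation and then choose $\phi$ so that the net operation applied to $\alpha$ is multiplication by $\e^{ig}$. Throughout I use the identification $\bfR{2}\cong\mathbb{C}$, $(x_1,x_2)\leftrightarrow x_1+i x_2$.

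First I would record how the matrices $A(\tau)$ act on complex numbers. A one-line computation shows that $A(\tau)\begin{pmatrix}x_1\\x_2\end{pmatrix}$ corresponds to $(x_1+ix_2)\,\e^{-i\tau}$; that is, under the identification $A(\tau)$ is multiplication by $\e^{-i\tau}$. In particular $A(-\phi)$ is multiplication by $\e^{i\phi}$, which is exactly the content of the preceding corollary $\phi\hbox{-}TS(\alpha)=\e^{i\phi}\,TS(\alpha)$.

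Next I would put $TS(\alpha)$ in closed form. Taking $\phi=0$ in Proposition \ref{formula for TS} gives $TS(\alpha)(t)=A(\rho(t)+\pi)\alpha(t)=-A(\rho(t))\alpha(t)$, where $\rho(t)$ is the tangent angle, $\alpha^\prime(t)/|\alpha^\prime(t)|=(\cos\rho(t),\sin\rho(t))$, and I have used $A(\rho+\pi)=A(\rho)A(\pi)=-A(\rho)$. Since the tangent turns at rate $\kappa$ per unit arc length and $ds=|\alpha^\prime|\,dt$, the angle satisfies $\rho^\prime(t)=\kappa(t)|\alpha^\prime(t)|$, hence $\rho(t)=\int_a^t\kappa(\tau)|\alpha^\prime(\tau)|\,d\tau+\rho_0$ with $\alpha^\prime(a)=(\cos\rho_0,\sin\rho_0)$; parametrization independence (Remark \ref{independece of parameter}) is what lets me use this for a general, not necessarily unit-speed, parametrization. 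Converting $A(\rho)$ to complex form then gives $TS(\alpha)(t)=-\e^{-i\rho(t)}\alpha(t)$.

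Combining the two steps, $\phi\hbox{-}TS(\alpha)=\e^{i\phi}\,TS(\alpha)=-\e^{i(\phi-\rho)}\alpha$. Substituting the prescribed $\phi(t)=\rho(t)+g(t)+\pi$ gives $\phi-\rho=g+\pi$, so that $-\e^{i(\phi-\rho)}=-\e^{i(g+\pi)}=\e^{ig}$, and therefore $\phi\hbox{-}TS(\alpha)=\e^{ig(t)}\alpha(t)$, as claimed. I do not expect a real obstacle: the argument is algebra once the complex form of $A(\tau)$ is in hand. The only points needing care are the harmless bookkeeping of the extra summand $\pi$ (which originates in the minus sign of $TS(\alpha)=-A(\rho)\alpha$, i.e. in $A(\pi)=-I$) and verifying that $\rho(t)=\int_a^t\kappa|\alpha^\prime|\,d\tau+\rho_0$ is indeed the tangent angle once the unit-speed assumption is dropped.
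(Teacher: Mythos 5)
Your proof is correct and follows essentially the same route the paper intends: the corollary is stated there without proof, as an immediate consequence of Proposition \ref{formula for TS} (equivalently Remark \ref{remark TS using J}, giving $TS(\alpha)=-A(\rho)\alpha$ with $\rho(t)=\int_a^t\kappa|\alpha^\prime|\,d\tau+\rho_0$) together with the identity $\phi\hbox{-}TS(\alpha)=\e^{i\phi}\,TS(\alpha)$, which is exactly the derivation you reconstruct in complex notation. As a bonus, your observation that $A(\tau)$ acts as multiplication by $\e^{-i\tau}$ silently corrects the typo in the paper's preceding corollary, where $\e^{\phi}$ should read $\e^{i\phi}$.
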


We will now characterize the range of the operator TreadmillSled and find an inverse of this operator. Under the assumption that a curve $\gamma$ is in the range of the operator $TS$, the formula for the inverse of the TreadmillSled provided below was found in \cite{KP}.

\begin{prop} \label{inverse formula for TS}

Let us denote $\gamma(s)=\begin{pmatrix}z(s)\\ w(s) \end{pmatrix}$. $\gamma$ is the Treadmillsled of a regular curve $\alpha$ if and only if $w^\prime(s)=-f(s)z(s)$ for some continuous function $f$ and $wf-z^\prime>0$. More precisely, if $f,w$ and $z$ satisfy the two previous conditions, and $F(s)$ is an antiderivative of $f(s)$,  then,

$$TS(\alpha)=\gamma\com{where} \alpha(t)=-A(-F(t)) \gamma(t)$$

\end{prop}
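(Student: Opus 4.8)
The plan is to prove the two implications separately, using the explicit parametrization from Proposition \ref{formula for TS} together with the recovery formula stated at the end, namely $\alpha(t)=-A(-F(t))\gamma(t)$ where $F'=f$. Throughout I would parametrize $\alpha$ by arc-length, which is legitimate by Remark \ref{independece of parameter}, so that $\alpha'(s)=(\cos\rho(s),\sin\rho(s))^T$ and, by Remark \ref{remark TS using J}, $TS(\alpha)=-A(\rho(s))\alpha(s)$ with $\rho'(s)=k(s)$ the curvature.

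For the \emph{forward} direction, suppose $\gamma=TS(\alpha)=(z,w)^T$ for some regular $\alpha$. I would compute $\gamma'$ directly from $z=-\langle\alpha,\alpha'\rangle$ and $w=\langle\alpha',J\alpha\rangle$ as in Remark \ref{remark TS using J}. Differentiating and using $\alpha''=k\,J\alpha'$ (valid for an arc-length curve), I expect to find that $z'$ and $w'$ simplify because the $\alpha''$ terms carry a factor involving $k$. The natural candidate for $f$ is $f=k=\rho'$, the curvature of $\alpha$; the key identity to verify is $w'=-f\,z$, i.e. $w'=-k\,z$. I would then check the strict inequality $wf-z'>0$: since $z'=-\langle\alpha',\alpha'\rangle-\langle\alpha,\alpha''\rangle=-1-k\langle\alpha,J\alpha'\rangle$ and $wf=k\langle\alpha',J\alpha\rangle=-k\langle\alpha,J\alpha'\rangle$, the difference collapses to $wf-z'=1>0$, which is exactly the positivity demanded (reflecting $|\alpha'|=1$). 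This both produces the continuous function $f$ and confirms the sign condition, so the forward implication is complete.

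For the \emph{converse}, suppose $z,w,f$ are given with $w'=-fz$ and $wf-z'>0$, and set $\alpha(t)=-A(-F(t))\gamma(t)$ with $F'=f$. I would verify two things: that $\alpha$ is regular, and that $TS(\alpha)=\gamma$. To get regularity I would differentiate $\alpha$, using $\frac{d}{dt}A(-F)=-F'\,A'(-F)=-f\,A(-F)J$ (since $A'(\tau)=-A(\tau)J$, which I would record as an elementary fact about the rotation matrices $A$), obtaining $\alpha'=-A(-F)(\gamma'-fJ\gamma)$. Computing the single entry structure of $\gamma'-fJ\gamma$ using $w'=-fz$, I expect the $z'$-component to cancel against $fw$ so that $|\alpha'|=wf-z'>0$, which is precisely where the positivity hypothesis guarantees $\alpha$ is regular (and incidentally identifies $t$ with arc-length). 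Then I would compute $TS(\alpha)$ from Definition \ref{definition of TS} and check it equals $\gamma$; since $\alpha=-A(-F)\gamma$ is a rotation of $-\gamma$ and $TS$ is built from the rotation-invariant quantities $\langle\alpha,\alpha'\rangle$ and $\langle\alpha',J\alpha\rangle$, this should reduce to matching $\rho(s)=F(s)+\pi$ against the defining relation $TS(\alpha)=-A(\rho)\alpha$ from Remark \ref{remark TS using J}.

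The main obstacle I anticipate is bookkeeping the sign and angle conventions so that the claimed formula $\alpha=-A(-F)\gamma$ comes out exactly, rather than off by a rotation or a reflection. The matrices $A(\tau)$ are clockwise rotations (note the sign pattern), so I must be careful that $A'(\tau)=-A(\tau)J$ with the paper's $J=\begin{pmatrix}0&-1\\1&0\end{pmatrix}$, and that the identity $\rho(s)=\int f$ matching $F$ holds up to the additive constant $\pi$ already present in Proposition \ref{formula for TS}. The differential-equation uniqueness already used in the preceding proposition guarantees that any two antiderivatives $F$ differing by a constant yield curves differing by $A(\text{const})$, which is why the inverse is well defined only up to the rotation ambiguity; I would remark on this to explain that the single antiderivative's additive constant accounts for the one-parameter family of preimages. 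Once the convention for $A$ and $J$ is pinned down, both directions are short computations driven entirely by the two scalar relations $z=-\langle\alpha,\alpha'\rangle$ and $w=\langle\alpha',J\alpha\rangle$.
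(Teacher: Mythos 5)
Your route coincides with the paper's own proof: forward direction via $z=-\langle\alpha,\alpha'\rangle$, $w=\langle\alpha',J\alpha\rangle$ with $f$ the curvature, converse by differentiating $\alpha=-A(-F)\gamma$, showing $|\alpha'|=wf-z'$, and verifying $TS(\alpha)=\gamma$. But there is a concrete sign error in your converse. Since $A'(\tau)=-A(\tau)J$, one has $\frac{d}{dt}A(-F(t))=-f\,A'(-F)=+f\,A(-F)J$, hence
$$\alpha'=-A(-F)\bigl(\gamma'+fJ\gamma\bigr),\qquad\text{not}\quad -A(-F)\bigl(\gamma'-fJ\gamma\bigr).$$
The cancellation you describe is exactly the one that occurs for the correct sign: using $w'=-fz$,
$$\gamma'+fJ\gamma=\begin{pmatrix} z'-fw\\ w'+fz\end{pmatrix}=\begin{pmatrix} z'-fw\\ 0\end{pmatrix},$$
so $|\alpha'|=fw-z'>0$, the unit tangent is $A(-F)\begin{pmatrix}1\\0\end{pmatrix}=\begin{pmatrix}\cos F\\ \sin F\end{pmatrix}$, hence $\rho=F$ (no additive $\pi$ is needed here), and $TS(\alpha)=-A(\rho)\alpha=-A(F)\bigl(-A(-F)\gamma\bigr)=\gamma$. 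With your sign, $\gamma'-fJ\gamma=\begin{pmatrix}z'+fw\\ -2fz\end{pmatrix}$: the second component does not vanish, $|\alpha'|\neq wf-z'$ in general, and both the regularity claim and the final identity fail as written. You anticipated sign bookkeeping as the obstacle; this is precisely the spot.

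Second, your ``parametrize $\alpha$ by arc-length, legitimate by Remark \ref{independece of parameter}'' glosses a step the paper carries out explicitly. That remark concerns the \emph{set} $\phi$-$TS(\alpha)$, whereas the proposition's conditions are statements about the given parametrization of $\gamma$, and they are covariant rather than invariant: if $\tilde\alpha=\alpha\circ h$ with $h'>0$ and $\alpha$ arc-length, then $\tilde\gamma=\gamma\circ h$, $\tilde f=h'\cdot(f\circ h)$, and $\tilde w\tilde f-\tilde z'=h'>0$. In particular $wf-z'$ equals $|\alpha'|$, which is $1$ only in the arc-length case — this chain-rule line is needed to get the stated strict inequality for a general regular $\alpha$, and it also shows your parenthetical that $t$ is ``identified with arc-length'' in the converse is wrong in general ($|\alpha'|=wf-z'$ need not be $1$). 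Fortunately your closing verification survives this, since $TS(\alpha)=-A(\rho)\alpha$, with $\rho$ the angle of the unit tangent, holds for any regular curve once the $1/|\alpha'|$ normalization of Definition \ref{definition of TS} is retained. With the sign corrected and the covariance line added, your argument is the paper's proof.
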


\begin{proof} Let us assume that $\gamma(s)$ is the TreadmillSled of a curve $\alpha$. Let us first consider the case when $\alpha$ is parametrized by arc-Length. If we denote by $k_\alpha$ the curvature of $\alpha$, then, using Remark \ref{remark TS using J} we obtain,

$$z=-\<\alpha,\alpha^\prime\>\com{and} w=\<\alpha^\prime,J\alpha\>  $$

Therefore,

$$w^\prime=\<\alpha^{\prime\prime},J\alpha\> +\<\alpha^\prime,J\alpha^\prime\>=k\<J\alpha^{\prime},J\alpha\>=k_\alpha\<\alpha^{\prime},\alpha\>=-k_\alpha z$$

and

$$ z^\prime=-1-\<\alpha,\alpha^{\prime\prime}\>=-1-k_\alpha\<\alpha,J\alpha^\prime\>=-1+k_\alpha\<J\alpha,\alpha^\prime\>=-1+k_\alpha w$$

Taking $f=k_\alpha$ we conclude that $w^\prime=-f z$ and $f w-z^\prime = 1$. If we now consider a regular curve $\tilde{\alpha}$, then we have that $\tilde{\alpha}(t)=\alpha(h(t))$ where $\alpha$ is parametrized by arc-length and $h(t)$ is a function with $h^\prime(t)>0$. Therefore, by either Remark \ref{independece of parameter} or by Definition \ref{definition of TS}, we get that if $\tilde{\gamma}=\begin{pmatrix}\tilde{z}\\ \tilde{w} \end{pmatrix}$ is the TreadmillSled of $\tilde{\alpha}$, then
$\tilde{\gamma}(t)=\gamma(h(t))$ where $\gamma=\begin{pmatrix}z\\ w \end{pmatrix}$ is the TreadmillSled of $\alpha$. Since $\alpha$ is parametrized by arc-length, then $f=-\frac{w^\prime}{z}$ is continuous and $fw-z^\prime=1$. Therefore,
$\tilde{f}(t)=-\frac{\tilde{w}^\prime(t)}{\tilde{z}(t)}=h^\prime(t) f(h(t))$ is continuous and

$$\tilde{w}(t) \tilde{f}(t)-\tilde{z}^\prime(t)=h^\prime(t) w(h(t)) f(h(t)-h^\prime(t) z^\prime(h(t))=h^\prime(t)>0$$

This inequality finishes the proof of one of the implications of the Proposition. Let us assume now that the functions $z,w$ are given and that
$f=-\frac{w^\prime}{z}$ is continuous and that $fw-z^\prime=-\frac{w w^\prime}{z}-z^\prime >0$. We need to prove that if $F^\prime=f$ then

$$\alpha(t)=-A(-F(t))\gamma(t) $$

satisfies that $TS(\alpha)=\gamma$. Using the fact that

$$\frac{dA(\tau)}{d\tau}=A(\tau+\frac{\pi}{2})=A(\tau)A(\frac{\pi}{2})=-A(\tau)J$$

we get that

$$\alpha^\prime=-fA(-F)J\gamma-A(-F)\gamma^\prime=-A(-F)\, (fJ\gamma+\gamma^\prime )$$

Therefore,

$$\<\alpha^\prime,\alpha^\prime\>=\<fJ\gamma+\gamma^\prime ,fJ\gamma+\gamma^\prime \>=f^2\<\gamma,\gamma\>+2 f \<J\gamma,\gamma^\prime\>+
\<\gamma^\prime,\gamma^\prime\>$$

Since $f=-\frac{w\prime}{z}$, we get,

$$\<\alpha^\prime,\alpha^\prime\>=\frac{(w^\prime)^2}{z^2}(z^2+w^2)-2 \frac{w^\prime}{z}(z w^\prime-w z^\prime )+(z^\prime)^2+(w^\prime)^2=(z^\prime+\frac{ww^\prime}{z})^2 = (fw-z^\prime)^2$$

Since we have that $fw-z^\prime>0$, then we conclude that $|\alpha^\prime|=fw-z^\prime$ and therefore $\alpha$ is a regular curve.

\begin{eqnarray*}
TS(\alpha)&=& \frac{1}{|\alpha^\prime|} \begin{pmatrix} -\<\alpha^\prime,\alpha \> \\ \,  \< \alpha^\prime,J\alpha\>\end{pmatrix}
                    = \frac{1}{|\alpha^\prime|} \begin{pmatrix} -\<\gamma,fJ\gamma+\gamma^\prime\>\\ \<fJ\gamma+\gamma^\prime,J\gamma\>\end{pmatrix}
                    = \frac{1}{|\alpha^\prime|} \begin{pmatrix} -\<\gamma,\gamma^\prime\>\\ f \<\gamma,\gamma \> + \<\gamma^\prime,J\gamma\>\end{pmatrix}
\end{eqnarray*}

Since,

$$-\<\gamma,\gamma^\prime\>=-ww^\prime-zz^\prime=wfz-zz^\prime=z(wf-z^\prime)=z|\alpha^\prime|$$

and,

$$ f \<\gamma,\gamma \> + \<\gamma^\prime,J\gamma\>=-\frac{w^\prime}{z}(z^2+w^2)+zw^\prime-wz^\prime=  -\frac{w^2w^\prime}{z}-wz^\prime=w(fw-z^\prime)=w|\alpha^\prime|$$

we conclude that $TS(\alpha)=\gamma$. This completes the proof of the proposition.

\end{proof}
\section{A dynamical interpretation for helicoidal minimal surfaces}

Helicoidal minimal hypersurfaces have been understood for a long time. For a detailed study we refer to the last section of the last chapter of the book of Differential Geometry by Graustein \cite{G}. We have that all the isometry surfaces (except for the catenoid) from the well known family of surfaces that starts with a helicoid and ends with a catenoid are helicoidal minimal surfaces. Actually, every helicoidal minimal surface belongs to one of these families.

\begin{figure}[h]\label{helicoidal minimal}
\centerline{\includegraphics[width=8.93cm,height=5cm]{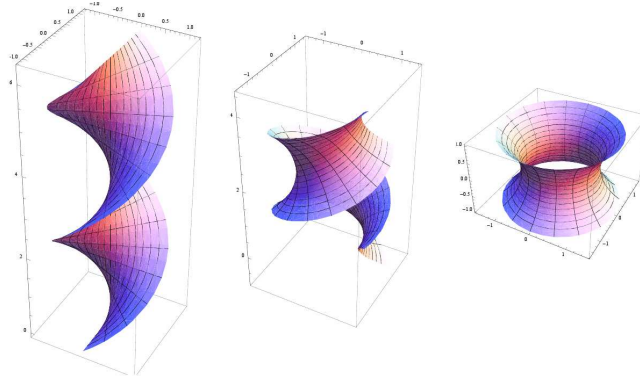}}
\caption{A helicoid (left), a helicoidal minimal surface (center) and a catenoid (right) are part of a family of isometry minimal surfaces.}
\end{figure}

A similar result for helicoidal cmc surfaces was proven in \cite{DD} by Do Carmo and Dajczer. They proved that every helicoidal surface belongs to a family of isometry surfaces that continuously move from an unduloid to a nodoid. In this section we provide a dynamical interpretation for the profile curve of a helicoidal minimal surface. Let us state and prove the main theorem in this section.

\begin{thm}\label{thm ts of helicoidal minimal}
A complete helicoidal surface $\phi(s,t)=(x(s) \cos(w t)+z(s) \sin(wt),t,-x(s)\sin(w t)+z(s)\cos(w t))$ is minimal if and only if the TreamillSled of its profile either is the $x$-axis  and $\phi$ is a helicoid or it is  one of the branches of the hyperbola $\frac{y^2}{M^2}-w^2x^2=1$ for some non zero $M$.
\end{thm}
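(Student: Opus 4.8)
The plan is to reduce minimality to an ODE for the TreadmillSled curve and integrate it. Since the $\phi$-TreadmillSled is invariant under orientation-preserving reparametrization (Remark \ref{independece of parameter}), I may assume the profile curve $\alpha(s)=(x(s),z(s))$ is parametrized by arc length, and I write $TS(\alpha)=\begin{pmatrix}P\\Q\end{pmatrix}$, so that by Definition \ref{definition of TS} and Remark \ref{remark TS using J} one has $P=-(xx'+zz')$ and $Q=xz'-zx'$, together with the structural relations $Q'=-\kappa P$ and $P'=-1+\kappa Q$, where $\kappa$ is the curvature of $\alpha$. A useful identity is $P^2+Q^2=x^2+z^2=:r^2$. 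First I would compute the first and second fundamental forms of $\phi$. Using that the rotation block acting on the $(x,z)$-coordinates is the matrix $A(wt)$ of Proposition \ref{formula for TS} and that $\det A(wt)=1$, the computation collapses to the quantities $P$, $Q$, $r^2$ and $\kappa$, giving $E=1$, $F=-wQ$, $G=1+w^2r^2$, and, after dividing by $\sqrt{1+w^2P^2}=\sqrt{EG-F^2}$, second fundamental form coefficients proportional to $\kappa$, $-w$ and $w^2Q$. Collecting terms yields the compact formula
\begin{equation*}
H=\frac{\kappa\,(1+w^2r^2)-w^2Q}{2\,(1+w^2P^2)^{3/2}}.
\end{equation*}

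With this in hand, minimality $H\equiv 0$ is exactly $\kappa\,(1+w^2r^2)=w^2Q$, i.e. $\kappa=\dfrac{w^2Q}{1+w^2r^2}$. Substituting this into the structural relations and using $r^2=P^2+Q^2$, the numerators telescope: the term $w^2Q^2$ cancels against part of $1+w^2r^2$, producing
\begin{equation*}
P'=-\frac{1+w^2P^2}{1+w^2r^2},\qquad Q'=-\frac{w^2PQ}{1+w^2r^2}.
\end{equation*}
Since $P'<0$ everywhere, $P$ is a strictly monotone parameter along the curve and
\begin{equation*}
\frac{dQ}{dP}=\frac{Q'}{P'}=\frac{w^2PQ}{1+w^2P^2}
\end{equation*}
is separable. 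Integrating gives $Q^2=M^2(1+w^2P^2)$ for a constant $M$, that is $\frac{Q^2}{M^2}-w^2P^2=1$ when $M\ne 0$ (the asserted hyperbola) and $Q\equiv 0$ when $M=0$. I expect \emph{the derivation of $H$ and this telescoping cancellation to be the main obstacle}; everything downstream is bookkeeping.

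Next I would settle the global and degenerate statements. On a solution, $1+w^2r^2=(1+w^2M^2)(1+w^2P^2)$, so $P'=-\tfrac{1}{1+w^2M^2}$ is a negative constant; hence $P$ is an affine function of arc length and, for a complete surface, ranges over all of $\R$, so the \emph{entire} branch (resp. the entire $x$-axis) is traced. Because $|Q|\ge|M|>0$ on the hyperbola, the continuous curve $TS(\alpha)$ cannot meet $Q=0$ and therefore lies on a single branch, never the whole hyperbola. In the case $M=0$ we have $Q\equiv 0$, whence $\kappa\equiv 0$ by the minimality relation, so $\alpha$ is a straight line; moreover $Q=xz'-zx'\equiv 0$ forces the position vector to be parallel to the velocity, i.e. $\alpha$ is a line through the origin, and the resulting surface is a helicoid. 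This identifies the $x$-axis case with the helicoid.

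Finally, for the converse I would invoke Proposition \ref{inverse formula for TS}. Given a branch $\gamma$ of $\frac{Q^2}{M^2}-w^2P^2=1$, oriented so that the positivity hypothesis of Proposition \ref{inverse formula for TS} holds (exactly one of the two orientations works), the proposition reconstructs a regular profile curve $\alpha$ with $TS(\alpha)=\gamma$. Since $Q^2=M^2(1+w^2P^2)$ then holds identically and the structural relations $Q'=-\kappa P$, $P'=-1+\kappa Q$ are automatic, differentiating the hyperbola identity and substituting the structural relations yields $\kappa(1+w^2r^2)=w^2Q$, i.e. $H\equiv 0$, so the surface is minimal (the isolated vertex $P=0$ is handled by continuity). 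The $x$-axis case gives $\kappa\equiv 0$ and the helicoid directly. Combining the two directions yields the stated equivalence.
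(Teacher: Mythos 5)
Your proposal is correct and follows essentially the same route as the paper: the same fundamental-form computation leading to $H=\bigl(\kappa(1+w^2r^2)-w^2Q\bigr)/\bigl(2(1+w^2P^2)^{3/2}\bigr)$, the same structural relations $P'=-1+\kappa Q$, $Q'=-\kappa P$, the same first integral $Q/\sqrt{1+w^2P^2}=M$ (the paper verifies it directly where you integrate the separable equation $dQ/dP$), the same split into the $M=0$ helicoid case and the hyperbola branches, and the same appeal to Proposition \ref{inverse formula for TS} for the converse. Your extra observations --- that $P'=-1/(1+w^2M^2)$ is constant so completeness traces the entire branch, and that $|Q|\ge|M|>0$ confines the TreadmillSled to a single branch --- are welcome details the paper leaves implicit in its ``direct computation'' remarks.
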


\begin{proof}
Let us assume that the profile curve $\alpha(s)=(x(s),z(s))$ is parametrized by arc-length. If $TS(\alpha)(s)=(\xi_1(s),\xi_2(s))$ then by Definition \ref{definition of TS} we have

$$\xi_1(s)= -x^\prime(s) x(s)-z^\prime(s) z(s) \com{and} \xi_2(s)= x(s)z^\prime(s)-z(s)x^\prime(s)$$

Since we are assuming that $\alpha$ is parametrized by arc-length, there exists a function $\theta$ such that
$\alpha^\prime(s)=(\cos(\theta(s)),\sin(\theta(s))$. From the previous equation we get that

$$ \theta^\prime(s)= x^\prime(s)z^{\prime\prime}(s)- z^\prime(s)x^{\prime\prime}(s) $$

We this definition of $\theta(s)$ and the definition of the function $\xi_1(s)$ and $\xi_2(s)$ given above, we obtain that

$$x(s)= -\xi_1(x)\cos(\theta(s))+\xi_2(s) \sin(\theta(s)) \com{and} z(s)= - \xi_1(s) \sin(\theta(s))-\xi_2(s)\cos(\theta(s))$$

A direct computation shows that

$$\nu=(\frac{\sin(wt-\theta)}{\sqrt{1+w^2\xi_1^2}},-\frac{w\xi_1}{\sqrt{1+w^2\xi_1^2}},\frac{\cos(wt-\theta)}{\sqrt{1+w^2\xi_1^2}})$$

is a Gauss map of the immersion $\phi$ and, with respect to this Gauss map, the first and second fundamental form are given by

$$ E=1\quad F=-w\xi_2\quad G=1+w^2(\xi_1^2+\xi_2^2)\quad e= \frac{\theta^\prime}{\sqrt{1+w^2\xi_1^2}} \quad f=\frac{-w}{\sqrt{1+w^2\xi_1^2}}\quad g=\frac{w^2\xi_2}{\sqrt{1+w^2\xi_1^2}}$$

Using the values above we get that the mean curvature $H$ of the $\phi$ is given by

$$H=\frac{-w^2\xi_2+\theta^\prime\, (1+w^2(\xi_1^2+\xi_2^2))}{2(1+w^2\xi_1^2)^{\frac{3}{2}}} $$

Therefore, the equation $H=0$, that is, the minimality of the immersion $\phi$,  implies

$$ \theta^\prime=\frac{w^2\xi_2}{1+w^2(\xi_1^2+\xi_2^2)}$$

From the definition of $\xi_1$ and $\xi_2$ we get that

\begin{eqnarray*}
\xi_1^\prime&=&-x^{\prime\prime}x-(x^\prime)^2-z^{\prime\prime}z-(z^\prime)^2\\
            &=&\theta^{\prime}\, x\sin(\theta)-\theta^{\prime}\, z\cos(\theta)-1\\
            &=&\theta^{\prime}\, \xi_2-1
\end{eqnarray*}

Likewise we obtain that $\xi_2^\prime=-\theta^\prime\, \xi_1$. Therefore if $\phi$ is minimal, replacing the expression for $\theta^\prime$ above, we get that

\begin{eqnarray*}
\xi_1^\prime&=& \frac{w^2\xi_2^2}{1+w^2(\xi_1^2+\xi_2^2)}-1\\
\xi_2^\prime&=& -\frac{w^2\xi_1\xi_2}{1+w^2(\xi_1^2+\xi_2^2)}
\end{eqnarray*}

A direct verification shows that if $(\xi_1(s),\xi_2(s))$ satisfies the differential equation above then,

$$\frac{\xi_2(s)}{\sqrt{1+w^2\xi_1(s)^2}}=M\com{for some constant $M$}$$

If $M=0$ then $\xi_2(s)=0$ and $\xi_1(s)=-t+c$. That is, in this case the TreadmillSled of $\alpha$ is the $x$-axis. A direct computation using the inverse formula for the TreadmillSled, see Proposition \ref{inverse formula for TS}, gives us that $\alpha$ is a line through the origin, and therefore $\phi$ is a helicoid. If $M$ is not zero, we get by squaring the centered equation above,  that the TreadmillSled lies in one of the branches of the hyperbola $\frac{y^2}{M^2}-w^2x^2=1$. A direct computation shows that if we parametrize one of these branches with the right orientation, that is, making sure that the second condition of Proposition \ref{inverse formula for TS} holds true, then we will find a parametrization of a profile curve that produces a minimal helicoidal surface. This finish the proof of the theorem.

\end{proof}

\begin{figure}[h]\label{treadmillSled helicoidal minimal}
\centerline{\includegraphics[width=6.54cm,height=5cm]{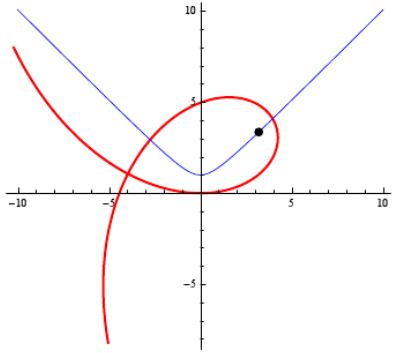}}
\caption{The treadmillSled of the profile curve of a helicoidal minimal surface is either the $x$-axis or a hyperbola. In the notation of Theorem \ref{thm ts of helicoidal minimal}, in this figure, we have $w=1$ and $M=1$}
\end{figure}


\begin{thebibliography}{1}


\bibitem{DD} Dajczer, M., Do Carmo, M.  \emph{Helicoidal surfaces with constant mean curvature}, Tohoku 

\bibitem{De} Delaunay, C. \emph{Sur la surface de revolution dont la courbure moyenne est constante}, J. Math. Pures Appl., Ser. 1 {\bf 6},  (1841), 309-320.


\bibitem{G} Graustein, W. \emph{Differential Geometry}, New York, The Macmillan Company, 1935.


\bibitem{KP} Kuhns, C. and Palmer, B. \emph{Helicoidal surfaces with constant anisotropic mean curvature}, arXiv:1010.1557

\bibitem{MP} Miyuki, K. and Palmer, B. \emph{Rolling construction for anisotropic Delaunay surfaces.} Pacific J. Math. {\bf 234} (2008), no. 2, 345-378.


\bibitem{P} Perdomo, O. \emph{A dynamical interpretation of the profile curve of cmc Twizzlers surfaces}, arXiv:1001.5198

\end{thebibliography}
\end{document}